\newtheorem{thm}{Theorem}[section]
\newtheorem{cor}[thm]{Corollary}
\newtheorem{prop}[thm]{Proposition}
\newtheorem{lem}[thm]{Lemma}
\DeclareMathOperator{\soc}{soc}
\DeclareMathOperator{\Ext}{Ext}
\DeclareMathOperator{\Ann}{Ann}
\DeclareMathOperator{\tr}{tr}
\begin{document}

\title[Cartan matrices of $p$-blocks]{On diagonal entries of \\ Cartan matrices of $p$-blocks}
\author[Y. Otokita]{Yoshihiro Otokita}
\address{Yoshihiro Otokita: \newline
Department of Mathematics and Informatics, \newline
Graduate School of Science, \newline
Chiba University, \newline
1-33 Yayoi-cho, Inage-ku, Chiba-shi, 263--8522, \newline
Japan.}
\email{otokita@chiba-u.jp}

\maketitle

\begin{abstract}
In this short note, we show some inequalities on Cartan matrices, centers and socles of blocks of group algebras. Our main theorems are generalizations of the facts on dimensions of Reynolds ideals. 
\end{abstract}

\section{Introduction}
Let $p$ be a prime, $G$ a finite group and $(K, \mathcal{O}, F)$ a splitting $p$-modular system for $G$ where $\mathcal{O}$ is a complete discrete valuation ring with quotient field $K$ of characteristic $0$ and residue field $F$ of characteristic $p$. For each block $B$ of the group algebra $FG$, we denote by $k(B)$ and $l(B)$ the numbers of irreducible ordinary and Brauer characters associated to $B$, respectively. Our purpose of this note is to show some inequalities on the Cartan matrix $C_{B}$, the center $Z(B)$ and the $n$-th socle $\soc^{n} (B)$ of $B$. In this note, for any integer $n \ge 1$, $c(B, n)$ denotes the sum of multiplicities of $S$ as composition factors in the factor module $P_{S} / P_{S}J^{n}$ where $S$ ranges over isomorphism classes of irreducible right $B$-modules, $P_{S}$ is the projective cover of $S$ and $J$ is the Jacobson radical of $B$. Therefore, for example, $c(B, 1) = l(B), c(B, 2) = l(B) + \sum_{S} \dim \Ext^{1}_{B}(S, S)$ and $c(B, \lambda) = \tr C_{B}$ for the Loewy length $\lambda$ of $B$. In this note, we prove the following theorems. 

\begin{thm} \label{Thm1.1} For any $1 \le n \le \lambda$, 
\[ \dim \soc^{n}(B) \cap Z(B) \le c(B, n). \]
\end{thm}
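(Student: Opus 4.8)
The plan is to exploit the fact that $B$ is a symmetric algebra and to translate both the socle and the center into orthogonal complements for a fixed symmetrizing form $\langle\,,\,\rangle$ on $B$. First I would recall the two standard descriptions for a symmetric algebra: the annihilator form of the socle series, $\soc^{n}(B) = (J^{n})^{\perp} = \{b \in B : bJ^{n} = 0\}$, and the commutator form of the center, $Z(B) = [B,B]^{\perp}$, where $[B,B]$ denotes the $F$-span of all $xy - yx$ and $\perp$ is taken with respect to $\langle\,,\,\rangle$. Since the form is nondegenerate, intersecting complements corresponds to summing the underlying subspaces, so
\[ \soc^{n}(B) \cap Z(B) = (J^{n})^{\perp} \cap [B,B]^{\perp} = (J^{n} + [B,B])^{\perp}, \]
and therefore $\dim \soc^{n}(B) \cap Z(B) = \dim B - \dim(J^{n} + [B,B]) = \dim \overline{B}/[\overline{B},\overline{B}]$ for the quotient algebra $\overline{B} = B/J^{n}$.

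Next I would match this quantity with $c(B,n)$. The key observation is that $P_{S}/P_{S}J^{n}$ is exactly the projective cover of $S$ over $\overline{B} = B/J^{n}$, so the multiplicity of $S$ in it is the diagonal Cartan invariant of $\overline{B}$ attached to $S$; summing over the (common) isomorphism classes of simple modules gives $c(B,n) = \tr C_{\overline{B}}$. Thus the theorem reduces to the purely algebraic inequality $\dim \overline{B}/[\overline{B},\overline{B}] \le \tr C_{\overline{B}}$, which I would establish for an arbitrary finite-dimensional split algebra $A$ in the form $\dim A/[A,A] \le \tr C_{A}$.

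For this last inequality I would fix a decomposition $1 = \sum_{i} e_{i}$ into primitive orthogonal idempotents and use the Peirce decomposition $A = \bigoplus_{i,j} e_{i}Ae_{j}$. The main computation is that every off-diagonal block lies in the commutator subspace: for $i \ne j$ and $a \in e_{i}Ae_{j}$ one has $[e_{i}, a] = e_{i}a - ae_{i} = a$, so $a \in [A,A]$; hence $A/[A,A]$ is spanned by the images of the diagonal blocks $e_{i}Ae_{i}$. Moreover, if $e_{i}$ and $e_{i'}$ are isomorphic, witnessed by $u \in e_{i}Ae_{i'}$, $v \in e_{i'}Ae_{i}$ with $uv = e_{i}$ and $vu = e_{i'}$, then for $a \in e_{i}Ae_{i}$ the identity $a = uva \equiv vau \pmod{[A,A]}$ shows that $e_{i}Ae_{i}$ and $e_{i'}Ae_{i'}$ have the same image, so $A/[A,A]$ is already spanned by the images of $e_{S}Ae_{S}$ for one idempotent $e_{S}$ per isomorphism class, giving $\dim A/[A,A] \le \sum_{S}\dim e_{S}Ae_{S} = \tr C_{A}$. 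The main obstacle is exactly this bookkeeping in the non-basic case: the naive off-diagonal argument alone only yields the weaker bound with $\sum_{i}\dim e_{i}Ae_{i}$, and one must verify that passing to isomorphic idempotents genuinely collapses the repeated diagonal blocks modulo $[A,A]$.
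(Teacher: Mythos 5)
Your proposal is correct, and its engine is the same as the paper's: the symmetric-algebra dualities $\soc^{n}(B)=(J^{n})^{\perp}$ and $Z(B)=[B,B]^{\perp}$ (the latter is K\"ulshammer's lemma, Lemma \ref{Lem2.3}), combined with the observation that every off-diagonal Peirce block lies in the commutator subspace --- your computation $[e_{i},a]=a$ for $a\in e_{i}Ae_{j}$, $i\neq j$, is exactly Lemma \ref{Lem3.1}. Where you genuinely diverge is in the bookkeeping. The paper first replaces $B$ by its basic algebra $eBe$ via Morita equivalence (Lemma \ref{Lem2.4}), so that there is one idempotent per isomorphism class and the subspace $B(n)=\sum_{i}e_{i}J^{n}e_{i}+\sum_{i\neq j}e_{i}Be_{j}$ has $\dim B(n)^{\perp}=c(B,n)$ on the nose; the inequality then falls out of $B(n)\subseteq eJ^{n}e+[eBe,eBe]$. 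You instead stay inside $B$, identify $\dim\soc^{n}(B)\cap Z(B)$ with $\dim\overline{B}/[\overline{B},\overline{B}]$ for $\overline{B}=B/J^{n}$, and prove the general inequality $\dim A/[A,A]\le\tr C_{A}$ by collapsing the diagonal blocks of isomorphic idempotents through the relation $a=uva\equiv vau\pmod{[A,A]}$. Your route is more self-contained (no appeal to Morita invariance of $\dim\Ann\cap Z$) and isolates a clean general statement that also yields the corollary $k(B)\le\tr C_{B}$ directly; the paper's normalization to the basic algebra is what later makes the exact equality analysis of Proposition \ref{Prop3.3} and Theorem \ref{Thm1.2} transparent. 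Two small points you should make explicit: first, that primitive idempotents, their orthogonal decomposition of $1$, and their conjugacy classes pass unchanged to $\overline{B}=B/J^{n}$ (since $J^{n}\subseteq J$ is nilpotent), so that $\tr C_{\overline{B}}=\sum_{S}\dim e_{S}Be_{S}/e_{S}J^{n}e_{S}=c(B,n)$ in the sense defined in the introduction; second, that for the symmetrizing form the left and right annihilators of the two-sided ideal $J^{n}$ coincide with $(J^{n})^{\perp}$, which is what lets you write $\soc^{n}(B)=(J^{n})^{\perp}$.
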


\begin{thm} \label{Thm1.2} Assume $2 \le \lambda$. Then there exists an integer $2 \le m \le \lambda$ such that 
\begin{align*}
& \dim \soc^{n}(B) \cap Z(B) = c(B, n), \\
& \dim \soc^{n'}(B) \cap Z(B) \lneq c(B, n')
\end{align*}
for all $1 \le n \le m < n' \le \lambda$. 
\end{thm}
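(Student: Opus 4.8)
The plan is to control the non-negative ``defect'' sequence $\delta(n) := c(B,n) - f(n)$, where $f(n) := \dim\bigl(\soc^{n}(B)\cap Z(B)\bigr)$ and $\delta(n)\ge 0$ by Theorem~\ref{Thm1.1}. Note that both $n\mapsto c(B,n)$ and $n\mapsto f(n)$ are non-decreasing: the socle series is increasing, and $c(B,n) = \sum_{k=0}^{n-1} d_{k}$, where $d_{k}$ is the number of composition factors isomorphic to $S$ in $\rad^{k}(P_{S})/\rad^{k+1}(P_{S})$, summed over $S$, so that $c(B,n+1)-c(B,n) = d_{n}\ge 0$. I would then reduce the theorem to two facts: (i) $\delta$ is non-decreasing, and (ii) $\delta(1) = \delta(2) = 0$. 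Granting these, put $m := \max\{\,n : \delta(n) = 0\,\}$; then $m\ge 2$ by (ii), while (i) forces $\delta(n) = 0$ for all $n\le m$ and $\delta(n') \ge \delta(m+1) > 0$ for all $n' > m$, which is precisely the assertion.

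The first substantive step is to reinterpret $f(n)$ as the dimension of a cocenter. Fixing a symmetrizing form $\tau$ on the symmetric algebra $B$ and writing $\perp$ for orthogonality with respect to $\langle a,b\rangle = \tau(ab)$, one has $\soc^{n}(B) = (J^{n})^{\perp}$ and $Z(B) = [B,B]^{\perp}$; hence $\soc^{n}(B)\cap Z(B) = (J^{n}+[B,B])^{\perp}$ and $f(n) = \dim B - \dim(J^{n}+[B,B]) = \dim\bigl((B/J^{n})/[B/J^{n},B/J^{n}]\bigr)$, the dimension of the zeroth Hochschild homology $\operatorname{HH}_{0}(B/J^{n})$.

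The heart of the argument is the increment inequality $f(n+1)-f(n) \le d_{n}$, which yields (i) since $c(B,n+1)-c(B,n) = d_{n}$. Applying the right-exact functor $\operatorname{HH}_{0}(-)$ to $B/J^{n+1}\twoheadrightarrow B/J^{n}$ identifies $f(n+1)-f(n)$ with $\dim\bigl(V_{n}/(V_{n}\cap[B/J^{n+1},B/J^{n+1}])\bigr)$, where $V_{n} = J^{n}/J^{n+1}$ is the $n$-th radical layer, an ideal of $B/J^{n+1}$ on which the action factors through the split semisimple algebra $A_{0} := B/J \cong \prod_{S}\operatorname{End}_{F}(S)$. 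Since the commutators of $V_{n}$ with a lift of $A_{0}$ already lie in $V_{n}\cap[B/J^{n+1},B/J^{n+1}]$, we obtain $f(n+1)-f(n) \le \dim\operatorname{HH}_{0}(A_{0},V_{n})$. A standard coinvariant computation over $A_{0}$ kills the off-diagonal blocks and identifies each diagonal block with a corner, giving $\dim\operatorname{HH}_{0}(A_{0},V_{n}) = \sum_{S}\dim\bigl(e_{S}J^{n}e_{S}/e_{S}J^{n+1}e_{S}\bigr) = d_{n}$, where $e_{S}$ is a primitive idempotent with top $S$. This establishes (i).

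Fact (ii) falls out of the same computation: the inequality above is an \emph{equality} exactly when $V_{n}\cap[B/J^{n+1},B/J^{n+1}] = [A_{0},V_{n}]$, and the only possible excess consists of images of ``deeper'' commutators $[J^{a},J^{b}]$ with $a+b = n$ and $a,b\ge 1$. For $n=0$ and $n=1$ no such decomposition exists, so the increments are exact: $f(1) = \dim\operatorname{HH}_{0}(A_{0}) = l(B) = c(B,1)$ and $f(2) = f(1)+d_{1} = l(B)+\sum_{S}\dim\Ext^{1}_{B}(S,S) = c(B,2)$, which is (ii). I expect the main obstacle to be the Hochschild-homology bookkeeping of the previous paragraph --- verifying $f(n) = \dim\operatorname{HH}_{0}(B/J^{n})$, pinning down $V_{n}\cap[B/J^{n+1},B/J^{n+1}]$, and carrying out the coinvariant computation over $A_{0}$ so that it recovers exactly $d_{n}$; once these are in place, the monotonicity of $\delta$ and its vanishing for $n\le 2$ are immediate.
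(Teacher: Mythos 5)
Your proof is correct, but it takes a genuinely different route from the paper. Both arguments rest on the same two dualities, $\soc^{n}(B)=(J^{n})^{\perp}$ and $Z(B)=[B,B]^{\perp}$, so both are really computing $\dim(J^{n}+[B,B])$; but from there the paper passes to the basic algebra $eBe$, Peirce-decomposes the commutator space (Lemmas \ref{Lem3.1} and \ref{Lem3.2}), and extracts the exact criterion of Proposition \ref{Prop3.3} --- equality at $n$ holds iff $[e_{i}Be_{j},e_{j}Be_{i}]\subseteq e_{i}J^{n}e_{i}+e_{j}J^{n}e_{j}$ for all $i,j$ --- from which downward closure is immediate ($J^{m}\subseteq J^{n}$) and the base case $n=2$ is Okuyama's computation. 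You instead read $\dim\soc^{n}(B)\cap Z(B)$ as $\dim\operatorname{HH}_{0}(B/J^{n})$ and prove the layerwise inequality $f(n+1)-f(n)\le d_{n}$ by pushing the radical layer $V_{n}=J^{n}/J^{n+1}$ down to its $B/J$-coinvariants; this gives monotonicity of the defect $\delta(n)=c(B,n)-f(n)$, which is a strictly stronger statement than the paper's (the paper only shows the equality set is an initial segment, not that the gap is non-decreasing), at the price of not yielding the clean necessary-and-sufficient condition of Proposition \ref{Prop3.3}. All the individual steps check out: the identification of the increment with $\dim V_{n}/(V_{n}\cap[B/J^{n+1},B/J^{n+1}])$, the bound by $\dim\operatorname{HH}_{0}(B/J,V_{n})=d_{n}$, and the values $f(1)=l(B)$, $d_{1}=\sum_{S}\dim\Ext^{1}_{B}(S,S)$. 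The one place you should tighten is the exactness of the second increment ($\delta(2)=0$): the assertion that ``the only possible excess comes from $[J^{a},J^{b}]$ with $a,b\ge 1$'' implicitly uses a Wedderburn--Malcev complement $A_{0}'$ with $B=A_{0}'\oplus J$ (available since $F$ is a splitting field), so that $[B,B]=[A_{0}',A_{0}']+[A_{0}',J]+[J,J]$ and $[A_{0}',A_{0}']\cap J=0$, whence $[B,B]\cap J\subseteq[B,J]+J^{2}$ by the modular law; alternatively you could pass to the basic algebra, where the complement is just $\bigoplus_{i}Fe_{i}$ and this is exactly how the paper handles $n=2$. With that spelled out, your argument is complete.
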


We mention some previous results as remarks of the main theorems. It is well known that $\dim \soc(B) \cap Z(B) = l(B)$ and Okuyama has shown that $\dim \soc^{2} (B) \cap Z(B) = l(B) + \sum_{S} \dim \Ext^{1}_{B} (S, S)$ in \cite{O} (the article is written in Japanese, so see \cite{Ko} for the original proof by Okuyama or see Proposition \ref{Prop3.3} in this paper). Therefore Theorem \ref{Thm1.2} is a generalization of these facts. Moreover we can obtain the famous inequality $k(B) \le \tr C_{B}$ as a corollary to Theorem \ref{Thm1.1} as will be proved later. 

\section{preliminaries}
This section is devoted to some notations and fundamental properties of finite-dimensional symmetric algebra $A$ over $F$ with a bilinear form $< \ , \ > \ : A \times A \to F$. The facts described in this section are applied to the basic algebra of $B$.   

For a subspace $U$ of $A$, we define 
\begin{align*}
& \Ann_{A}(U) = \{ a \in A \ | \ Ua = 0 \}, \\
& U^{\perp} = \{ a \in A \ | \ <U, a> = 0 \}.
\end{align*}

\begin{lem} \label{Lem2.1} Let $U, V$ be two subspaces of $A$. Then the following hold:
\begin{enumerate}
\item $(U^{\perp})^{\perp}=U$. \\
$(U + V)^{\perp} = U^{\perp} \cap V^{\perp}$. \\
$(U \cap V)^{\perp} = U^{\perp} + V^{\perp}$.
\item If $V \subseteq U$, then $U^{\perp} \subseteq V^{\perp}$.
\item $\dim U^{\perp} = \dim A - \dim U$.
\item If $U$ is an ideal of $A$, then $\Ann_{A}(U) = U^{\perp}$.
\end{enumerate}
\end{lem}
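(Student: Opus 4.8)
The plan is to base everything on the two structural features of the form attached to the symmetric algebra $A$: it is nondegenerate and symmetric, and it may be written as $\langle x, y \rangle = \lambda(xy)$ for a symmetrizing linear form $\lambda \colon A \to F$ satisfying the trace identity $\lambda(xy) = \lambda(yx)$. I would prove (3) first, since the statements in (1) and (2) then follow from it by dimension counts or by direct unwinding of the definitions. For (3), consider the linear map $\phi \colon A \to U^{*}$ sending $a$ to the functional $u \mapsto \langle u, a \rangle$ on $U$. Its kernel is exactly $U^{\perp}$, and nondegeneracy of $\langle \ , \ \rangle$ together with the fact that every functional on $U$ extends to a functional on $A$ shows that $\phi$ is surjective. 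Hence $\dim A - \dim U^{\perp} = \dim U^{*} = \dim U$, which is (3).

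Next I would treat (1) and (2). The inclusion $U \subseteq (U^{\perp})^{\perp}$ is immediate from symmetry of the form, and applying (3) twice gives $\dim (U^{\perp})^{\perp} = \dim A - \dim U^{\perp} = \dim U$, so the two spaces coincide. The identity $(U + V)^{\perp} = U^{\perp} \cap V^{\perp}$ is purely formal, as $a$ is orthogonal to $U + V$ if and only if it is orthogonal to both $U$ and $V$. For the third identity I would apply the second one to the pair $U^{\perp}, V^{\perp}$, obtaining $(U^{\perp} + V^{\perp})^{\perp} = (U^{\perp})^{\perp} \cap (V^{\perp})^{\perp} = U \cap V$ by the first identity, and then take orthogonal complements once more, using $(W^{\perp})^{\perp} = W$, to conclude $U^{\perp} + V^{\perp} = (U \cap V)^{\perp}$. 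Statement (2) is then clear: if $V \subseteq U$ and $a$ is orthogonal to $U$, then it is orthogonal to $V$.

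The part that genuinely uses the algebra structure, and which I expect to be the main obstacle, is (4). The inclusion $\Ann_{A}(U) \subseteq U^{\perp}$ is easy: if $Ua = 0$ then $\langle u, a \rangle = \lambda(ua) = \lambda(0) = 0$ for every $u \in U$. For the reverse inclusion, let $a \in U^{\perp}$ and fix $u \in U$; I want to deduce $ua = 0$. By nondegeneracy it suffices to show $\langle ua, x \rangle = 0$ for all $x \in A$. Here I would invoke both the trace identity and the hypothesis that $U$ is an ideal: writing $\langle ua, x \rangle = \lambda(uax) = \lambda(xua) = \langle xu, a \rangle$ and noting that $xu \in U$ because $U$ is a left ideal, the right-hand side vanishes since $a \in U^{\perp}$. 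Thus $ua = 0$ for every $u \in U$, giving $a \in \Ann_{A}(U)$. The point to watch is that the ideal hypothesis is exactly what allows one to absorb an arbitrary $x \in A$ into $U$, so that orthogonality to $U$ can be brought to bear; without it the reverse inclusion need not hold.
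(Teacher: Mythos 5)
Your proof is correct. Note that the paper itself offers no proof of Lemma~\ref{Lem2.1}; it is stated as a collection of standard facts about finite-dimensional symmetric algebras, so there is no argument in the text to compare against. Your route is the expected one: deduce (3) from nondegeneracy via the surjection $A \to U^{*}$, get $(U^{\perp})^{\perp} = U$ by combining the easy inclusion with the dimension count, derive the two de Morgan-type identities and (2) formally, and use the trace identity $\lambda(xy) = \lambda(yx)$ together with the ideal hypothesis for (4). Two small points worth making explicit in a write-up: the inclusion $U \subseteq (U^{\perp})^{\perp}$ uses the symmetry of the form (since $U^{\perp}$ is defined by $\langle U, a\rangle = 0$ while membership in $(U^{\perp})^{\perp}$ requires $\langle U^{\perp}, u\rangle = 0$), which you correctly flag; and in (4) you need $U$ to be a left ideal to conclude $xu \in U$, which is covered by the paper's (two-sided) ideal hypothesis. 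Your closing remark that the ideal hypothesis is what lets you absorb the arbitrary element $x$ into $U$ identifies exactly where the algebra structure enters.
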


Furthermore, we define the \textit{commutator subspace} of subspaces $U$ and $V$ of $A$ by
\[ [U, V] = \sum_{u \in U, v \in V} F(uv - vu). \]
By the definition above, the next lemma is clear.

\begin{lem} \label{Lem2.2} Let $U, V$ and $W$ be subspaces of $A$. Then we have 
\begin{align*}
& [U + V, W] = [U, W] + [V, W], \\
& [U, V + W] = [U, V] + [U, W].
\end{align*}
\end{lem}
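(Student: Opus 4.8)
The plan is to verify both equalities directly from the definition of the commutator subspace, exploiting the fact that a single commutator $xy - yx$ is additive in each variable. Since $[U,V]$ is by definition the $F$-span of the elements $uv - vu$ with $u \in U$ and $v \in V$, it suffices for each equality to compare spanning sets and to check the two inclusions.

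First I would record two elementary observations. \emph{Monotonicity:} if $U \subseteq U'$, then $[U,W] \subseteq [U',W]$, because the generators $uw - wu$ of the left-hand side form a subset of the generators of the right-hand side, and the span over a subset is contained in the span over the larger set. \emph{Symmetry:} $[U,V] = [V,U]$, since $vu - uv = -(uv - vu)$ and the $F$-span is stable under scalar multiples, so the two generating sets span the same subspace.

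For the identity $[U+V,W] = [U,W] + [V,W]$, I would prove the inclusion $\subseteq$ by taking a generator $xw - wx$ of the left-hand side with $x \in U+V$ and $w \in W$, writing $x = u+v$ with $u \in U$ and $v \in V$, and using $xw - wx = (uw - wu) + (vw - wv)$, which lies in $[U,W] + [V,W]$; as such generators span $[U+V,W]$, the inclusion follows. For the reverse inclusion I would invoke monotonicity: from $U \subseteq U+V$ and $V \subseteq U+V$ one gets $[U,W] \subseteq [U+V,W]$ and $[V,W] \subseteq [U+V,W]$, whence their sum is contained in $[U+V,W]$.

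Finally, the second identity $[U,V+W] = [U,V] + [U,W]$ follows immediately from the first by applying the symmetry $[A,B] = [B,A]$ to each of the three terms, so no separate argument is needed. The only point requiring any care—and it is the sole, minor obstacle—is keeping the distinction between a spanning set and its $F$-span straight when passing between generators and arbitrary elements; there is no substantive difficulty here, which is precisely why the author notes that the lemma is clear.
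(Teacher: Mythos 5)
Your proof is correct and fills in exactly the routine verification the paper has in mind when it says the lemma ``is clear'' from the definition of the commutator subspace: the paper offers no written proof, and your argument (additivity of a single commutator in each slot, plus monotonicity and the symmetry $[U,V]=[V,U]$ to get the second identity from the first) is the standard one. No discrepancy to report.
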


In particular, the next lemma is important.

\begin{lem} [{\cite[Lemma A]{K}}] \label{Lem2.3} $[A, A]^{\perp} = Z(A)$. \end{lem}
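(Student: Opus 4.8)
The plan is to unwind both sides of the claimed equality directly against the defining properties of the symmetric bilinear form—its symmetry, its associativity (invariance) $<ab,c> = <a,bc>$, and its nondegeneracy—and to reduce the whole statement to a single chain of identities. First I would observe that since $[A,A]$ is by definition spanned by the commutators $uv - vu$ with $u, v \in A$, an element $a \in A$ lies in $[A,A]^{\perp}$ if and only if $<uv - vu, a> = 0$ for all $u, v \in A$; there is no need to test against arbitrary $F$-linear combinations, as orthogonality to a spanning set is the same as orthogonality to its span.

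The key computational step is to move the commutator off the left argument and onto $a$. Using associativity I would write $<uv, a> = <u, va>$, and using first symmetry and then associativity I would rewrite $<vu, a> = <a, vu> = <av, u> = <u, av>$. Subtracting these yields the identity $<uv - vu, a> = <u, va - av>$, valid for all $u, v \in A$. Hence $a \in [A,A]^{\perp}$ precisely when $<u, va - av> = 0$ for every pair $u, v \in A$.

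Finally I would invoke nondegeneracy: fixing $v$ and letting $u$ range over all of $A$, the vanishing of $<u, va - av>$ for all $u$ forces $va - av = 0$. Therefore $a \in [A,A]^{\perp}$ if and only if $va = av$ for all $v \in A$, which is exactly the condition $a \in Z(A)$. Since every equivalence in this argument is reversible, the two inclusions are established simultaneously and the converse needs no separate treatment.

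I do not expect a genuine obstacle here, as the content is purely formal manipulation of the invariant form. The only point that demands care is the bookkeeping—tracking exactly where symmetry is used versus where associativity is used in deriving $<uv - vu, a> = <u, va - av>$—since a single misapplied symmetry would silently swap a sign or a side and break the reduction to nondegeneracy.
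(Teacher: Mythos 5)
Your argument is correct: it is the standard proof that for a symmetric algebra the orthogonal of the commutator subspace is the center, using exactly the symmetry, associativity, and nondegeneracy of the form, and each step checks out (including the reduction $<uv-vu,a> = <u,va-av>$). The paper itself gives no proof of this lemma but simply cites K\"ulshammer's Lemma A, so your write-up supplies the expected argument rather than diverging from one in the text.
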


Now let $e_{1}, \dots, e_{l(B)}$ be representatives for the conjugacy classes of primitive idempotents in $B$. Thus $c_{ij} = \dim e_{i}Be_{j}$ and $c(B, n) = \sum_{1 \le i \le l(B)} \dim e_{i}Be_{i}/e_{i}J^{n}e_{i}$ where $C_{B} = (c_{ij})$. We put $e = e_{1} + \dots + e_{l(B)}$ and denote by $eBe$ the basic algebra of $B$. Then $B$ and $eBe$ are symmetric algebras over $F$. Moreover they are Morita equivalent since $B=BeB$, and hence the next lemma holds. 

\begin{lem} \label{Lem2.4} For an ideal $I$ of $B$, $eIe$ is that of $eBe$ and we have
\[ \dim \Ann_{B}(I) \cap Z(B) = \dim \Ann_{eBe} (eIe) \cap Z(eBe). \]
\end{lem}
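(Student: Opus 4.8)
The statement has two parts, and I would treat them in turn. That $eIe$ is an ideal of $eBe$ is a direct idempotent computation: for $ebe \in eBe$ and $eye \in eIe$ (with $b \in B$, $y \in I$) one has $(ebe)(eye) = e(beye)e$ and $(eye)(ebe) = e(yebe)e$, and since $I$ is two-sided both $beye$ and $yebe$ lie in $I$; thus $eIe$ absorbs multiplication on both sides, and the general case follows by linearity. The substance of the lemma is the dimension equality, which I would derive from the isomorphism between the centers of the Morita equivalent algebras $B$ and $eBe$.

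Recall that because $e$ is a full idempotent, i.e. $B = BeB$, the map
\[ \phi : Z(B) \longrightarrow Z(eBe), \qquad \phi(z) = eze, \]
is an algebra isomorphism (the standard Morita invariance of centers; note that for central $z$ one has $eze = ez = ze$, and injectivity is itself immediate since $eze = 0$ forces $ze = 0$, hence $zBeB = zB = 0$ and $z = 0$). The plan is to show that $\phi$ carries $\Ann_{B}(I) \cap Z(B)$ bijectively onto $\Ann_{eBe}(eIe) \cap Z(eBe)$; since $\phi$ is a linear isomorphism this immediately yields the asserted equality of dimensions.

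For the forward inclusion, let $z \in \Ann_{B}(I) \cap Z(B)$, so $Iz = 0$. Then, using $eze = ez$ and centrality, $(eIe)(eze) = eIe \cdot ez = e(Iez)$, and $Iez = (Iz)e = 0$, whence $\phi(z) \in \Ann_{eBe}(eIe)$. For the reverse inclusion I would use surjectivity of $\phi$: given $w \in \Ann_{eBe}(eIe) \cap Z(eBe)$, write $w = eze$ with $z \in Z(B)$; the hypothesis $(eIe)(eze) = 0$ simplifies to $eIez = 0$. To recover $Iz = 0$ I would exploit fullness once more in the form $I = BIB = (BeB)I(BeB) = Be(BIB)eB = BeIeB$, so that $Iz = (BeIeB)z = B(eIez)B = 0$ by centrality of $z$. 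Hence $z \in \Ann_{B}(I)$, and $\phi$ restricts to the desired bijection.

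The one step requiring care is this reverse direction: the compressed condition $eIez = 0$ is a priori much weaker than $Iz = 0$, and the identity $I = BeIeB$ --- valid precisely because $B = BeB$ --- is what bridges the gap. Everything else is formal manipulation of the idempotent $e$ together with the centrality relations $ez = ze = eze$. I would finally record that, $\phi$ being injective and mapping the one subspace onto the other, the two intersections have the same dimension, which is the claim.
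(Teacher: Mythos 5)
Your proof is correct and follows essentially the same route as the paper, which states the lemma with no more justification than that $B$ and $eBe$ are Morita equivalent because $B=BeB$; your argument is an explicit verification of exactly this, via the center isomorphism $z \mapsto eze$ and the fullness identity $I = BeIeB$ that rescues the reverse inclusion. The one ingredient you quote without proof, surjectivity of $Z(B) \to Z(eBe)$, is precisely the standard Morita fact the paper is implicitly invoking, so nothing is missing.
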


Finally, we define a subspace 
\[ B(n) = \sum_{1 \le i \le l(B)} e_{i} J ^{n} e_{i} + \sum_{1 \le i \neq j \le l(B)} e_{i}Be_{j} \]
of $eBe$ for each $n \ge 1$. Since $eBe = \sum_{1 \le i, j \le l(B)} e_{i}Be_{j}$ and $B(n)$ are direct sums, we deduce the next lemma by Lemma \ref{Lem2.1}.

\begin{lem} \label{Lem2.5} 
\begin{align*}
& \dim eBe = \sum_{1 \le i, j \le l(B)} c_{ij}, \\
& \dim B(n)^{\perp} = c(B, n).
\end{align*}
\end{lem}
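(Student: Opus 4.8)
The plan is to read off both identities directly from the direct-sum decomposition of $eBe$, using Lemma \ref{Lem2.1}(3) to convert a dimension count of $B(n)$ into one for $B(n)^{\perp}$. The first identity is immediate: since $eBe = \bigoplus_{1 \le i, j \le l(B)} e_{i}Be_{j}$, its dimension is the sum of the $\dim e_{i}Be_{j} = c_{ij}$, which gives $\dim eBe = \sum_{1 \le i, j \le l(B)} c_{ij}$.

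For the second identity I would first compute $\dim B(n)$. The summands $e_{i}J^{n}e_{i}$ and $e_{i}Be_{j}$ (with $i \neq j$) defining $B(n)$ sit inside pairwise distinct components $e_{i}Be_{j}$ of the decomposition above, so their sum is direct and $\dim B(n) = \sum_{i} \dim e_{i}J^{n}e_{i} + \sum_{i \neq j} c_{ij}$. Applying Lemma \ref{Lem2.1}(3) with $A = eBe$ then gives $\dim B(n)^{\perp} = \dim eBe - \dim B(n)$. Substituting the two counts, the off-diagonal terms $\sum_{i \neq j} c_{ij}$ cancel and, using $c_{ii} = \dim e_{i}Be_{i}$, one is left with
\[ \dim B(n)^{\perp} = \sum_{i} \bigl( c_{ii} - \dim e_{i}J^{n}e_{i} \bigr) = \sum_{i} \dim e_{i}Be_{i}/e_{i}J^{n}e_{i}, \]
which is precisely $c(B, n)$ as defined in the introduction.

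I do not expect any genuine obstacle here; both statements follow from the direct-sum structure together with the nondegeneracy of the form encoded in Lemma \ref{Lem2.1}(3). The only points demanding care are purely organizational: one must keep the diagonal contributions $e_{i}Be_{i}$ apart from the off-diagonal ones so that the cancellation is transparent, and one must remember that the orthogonal complement is taken inside the symmetric algebra $eBe$ (playing the role of $A$), rather than inside $B$ itself.
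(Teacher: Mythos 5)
Your proof is correct and follows exactly the route the paper indicates: both identities come from the direct-sum decompositions of $eBe$ and $B(n)$, with Lemma \ref{Lem2.1}(3) converting $\dim B(n)$ into $\dim B(n)^{\perp} = \dim eBe - \dim B(n)$ so that the off-diagonal terms cancel. The paper merely sketches this, and your write-up supplies the same computation in full detail.
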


\section{Proof of main theorems}
Theorem \ref{Thm1.1} is due to the next lemma.

\begin{lem} \label{Lem3.1} If $i \neq j$, then $e_{i}Be_{j} \subseteq [eBe, eBe]$. \end{lem}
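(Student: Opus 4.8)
The plan is to exhibit each element of $e_{i}Be_{j}$ explicitly as a single commutator inside the basic algebra $eBe$. Fix $i \neq j$ and take an arbitrary element $x \in e_{i}Be_{j}$, so that $x = e_{i}xe_{j}$. Since the representatives $e_{1}, \dots, e_{l(B)}$ are pairwise orthogonal idempotents with $e = e_{1} + \dots + e_{l(B)}$, both $e_{i} = ee_{i}e$ and $x = e_{i}xe_{j}$ lie in $eBe$, which is what will make a commutator of them a legitimate element of $[eBe, eBe]$.

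First I would compute the left and right actions of $e_{i}$ on $x$. Using $e_{i}^{2} = e_{i}$ gives $e_{i}x = e_{i}(e_{i}xe_{j}) = e_{i}xe_{j} = x$, while the orthogonality relation $e_{j}e_{i} = 0$ gives $xe_{i} = (e_{i}xe_{j})e_{i} = e_{i}x(e_{j}e_{i}) = 0$. Combining these,
\[ e_{i}x - xe_{i} = x - 0 = x. \]
Hence $x$ is realized as the commutator $e_{i}x - xe_{i}$ of two elements of $eBe$, so $x \in [eBe, eBe]$ directly by the definition of the commutator subspace. Since $x$ was an arbitrary element of $e_{i}Be_{j}$, this yields $e_{i}Be_{j} \subseteq [eBe, eBe]$ whenever $i \neq j$, as claimed.

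The argument is a short direct computation, so I expect no substantial obstacle. The only points requiring care are the two structural facts used implicitly: that the chosen primitive idempotents are genuinely orthogonal, so that $e_{j}e_{i} = 0$, and that $e_{i}$ itself belongs to $eBe$, so that the single commutator $e_{i}x - xe_{i}$ lives inside the basic algebra rather than merely in $B$. Once these are noted, the inclusion follows element by element.
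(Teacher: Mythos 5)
Your proof is correct and is essentially the paper's own argument: the paper writes $x = xe_{j} - e_{j}x$ (a commutator with $e_{j}$), while you write $x = e_{i}x - xe_{i}$ (a commutator with $e_{i}$); these are the same idea, differing only in which idempotent is used. Both hinge on the same orthogonality computation, so there is nothing to add.
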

\begin{proof}
For any $x \in e_{i}Be_{j}$, we can write $x = xe_{j} - e_{j}x$. So $x \in [e_{i}Be_{j}, e_{j}Be_{j}] \subseteq [eBe, eBe]$. 
\end{proof}

Now we prove Theorem \ref{Thm1.1}. 

\begin{proof}[\textit{Proof of Theorem \ref{Thm1.1}}]
By Lemma \ref{Lem3.1}, $B(n) \subseteq eJ^{n}e + [eBe, eBe]$ and hence $\Ann_{eBe}(eJ^{n}e) \cap Z(eBe) \subseteq B(n)^{\perp}$ using Lemma \ref{Lem2.1} and \ref{Lem2.3}. So Lemma \ref{Lem2.4} and \ref{Lem2.5} gives us that $\dim \soc^{n}(B) \cap Z(B) = \dim \Ann_{B}(J^{n}) \cap Z(B) \le c(B, n)$ as claimed.
\end{proof}

We show a corollary to Theorem \ref{Thm1.1}. We substitute $\lambda$ for $n$ and thus we obtain the next inequality (remark $k(B)=\dim Z(B)$ and see also {\cite[Proposition 4.2]{B}} or {\cite[Theorem A]{W}}). 

\begin{cor} $k(B) \le \tr C_{B}$. \end{cor}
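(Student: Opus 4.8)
The plan is to specialize Theorem~\ref{Thm1.1} at the endpoint $n = \lambda$ and then identify both sides of the resulting inequality with the quantities appearing in the corollary. Setting $n = \lambda$ in the theorem yields immediately
\[ \dim \soc^{\lambda}(B) \cap Z(B) \le c(B, \lambda), \]
so everything reduces to evaluating the two sides at this top degree.

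For the left-hand side, I would use the description of the socle series by annihilators that is already invoked in the proof of Theorem~\ref{Thm1.1}, namely $\soc^{n}(B) = \Ann_{B}(J^{n})$. Since $\lambda$ is the Loewy length of $B$, we have $J^{\lambda} = 0$, hence $\Ann_{B}(J^{\lambda}) = B$ and therefore $\soc^{\lambda}(B) = B$. Consequently $\soc^{\lambda}(B) \cap Z(B) = Z(B)$, and invoking the standard fact $\dim Z(B) = k(B)$ (recalled as a remark in the introduction) gives $\dim \soc^{\lambda}(B) \cap Z(B) = k(B)$.

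For the right-hand side, I would simply quote the identity $c(B, \lambda) = \tr C_{B}$ recorded in the introduction; this is the observation that, once $n$ reaches the Loewy length, each factor module $P_{S}/P_{S}J^{\lambda}$ equals $P_{S}$, so that $c(B, \lambda)$ counts the full diagonal multiplicities $c_{ii} = \dim e_{i}Be_{i}$ and their sum is precisely the trace of the Cartan matrix. Combining the two evaluations with the specialized inequality then produces $k(B) \le \tr C_{B}$.

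There is essentially no serious obstacle here: the whole content is the substitution $n = \lambda$, and the only points requiring care are the two boundary identifications $\soc^{\lambda}(B) = B$ and $c(B, \lambda) = \tr C_{B}$, both of which follow directly from $J^{\lambda} = 0$ together with facts already stated. The mildest verification, should one wish to be fully self-contained, is confirming $\soc^{\lambda}(B) = B$, but this is immediate from the vanishing of $J^{\lambda}$.
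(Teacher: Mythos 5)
Your proposal is correct and follows exactly the paper's argument: substitute $n = \lambda$ into Theorem~\ref{Thm1.1}, use $J^{\lambda} = 0$ to get $\soc^{\lambda}(B) = B$ so the left side is $\dim Z(B) = k(B)$, and invoke $c(B,\lambda) = \tr C_{B}$ for the right side. The paper states this in one line; you have merely spelled out the same boundary identifications.
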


To prove Theorem \ref{Thm1.2}, we have to see the structure of $[eBe, eBe]$ as follows.

\begin{lem} \label{Lem3.2} $[eBe, eBe] \subseteq \sum_{1 \le i, j \le l(B)} [e_{i}Be_{j}, e_{j}Be_{i}] + \sum_{1 \le i \neq j \le l(B)} e_{i}Be_{j}$. \end{lem}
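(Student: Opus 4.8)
The goal is to show that the commutator subspace $[eBe, eBe]$ is contained in the sum $\sum_{i,j} [e_{i}Be_{j}, e_{j}Be_{i}] + \sum_{i \neq j} e_{i}Be_{j}$. The natural approach is to use the orthogonal idempotent decomposition $eBe = \bigoplus_{i,j} e_{i}Be_{j}$ together with the bilinearity of the commutator (Lemma 2.2). Since every element of $eBe$ is a sum of elements from the blocks $e_{i}Be_{j}$, Lemma 2.2 lets me reduce the claim to understanding the generating commutators $[e_{i}Be_{j}, e_{k}Be_{l}]$ for arbitrary indices.

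Let me sketch the key computation. First I would expand
\[ [eBe, eBe] = \Bigl[ \sum_{i,j} e_{i}Be_{j}, \sum_{k,l} e_{k}Be_{l} \Bigr] = \sum_{i,j,k,l} [e_{i}Be_{j}, e_{k}Be_{l}] \]
by repeated application of Lemma 2.2. So it suffices to control each summand $[e_{i}Be_{j}, e_{k}Be_{l}]$. The orthogonality relations $e_{j}e_{k} = \delta_{jk}e_{j}$ (for the relevant primitive idempotents, products across distinct classes vanish) force the products $uv$ and $vu$ with $u \in e_{i}Be_{j}$, $v \in e_{k}Be_{l}$ to be nonzero only when the inner indices match. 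Concretely, $uv \in e_{i}Be_{l}$ is nonzero only if $j = k$, and $vu \in e_{k}Be_{j}$ is nonzero only if $l = i$.

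The main case analysis then splits as follows. If neither $j=k$ nor $l=i$ holds, the commutator $[e_{i}Be_{j}, e_{k}Be_{l}]$ is zero and contributes nothing. If exactly one of the matching conditions holds, say $j=k$ but $l \neq i$, then $uv - vu = uv \in e_{i}Be_{l}$ with $i \neq l$, which lands in the off-diagonal part $\sum_{i \neq j} e_{i}Be_{j}$; the symmetric subcase is handled identically. The remaining case is $j = k$ and $l = i$ simultaneously, where the commutator becomes $[e_{i}Be_{j}, e_{j}Be_{i}]$, precisely the diagonal commutator terms appearing in the first sum of the statement. Collecting these three outcomes gives the desired containment.

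I expect the main (though modest) obstacle to be bookkeeping the index matching cleanly, especially ensuring that the sub-case $j=k$, $l=i$ with $i=j$ is correctly absorbed: there the term is $[e_{i}Be_{i}, e_{i}Be_{i}]$, which is of the form $[e_{i}Be_{j}, e_{j}Be_{i}]$ with $i=j$ and so legitimately belongs to the first sum. No single step is genuinely hard; the content is entirely in organizing the idempotent orthogonality so that every generating commutator is seen to land in one of the two prescribed summands.
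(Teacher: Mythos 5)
Your proposal is correct and follows essentially the same route as the paper: expand $[eBe,eBe]$ via Lemma \ref{Lem2.2} into the commutators $[e_{i}Be_{j},e_{s}Be_{t}]$ and run the same case analysis on which inner indices match, with the mixed cases landing in the off-diagonal blocks and the fully matched case giving $[e_{i}Be_{j},e_{j}Be_{i}]$. No substantive differences.
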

 
\begin{proof}
We first obtain $[eBe, eBe] = \sum_{1 \le i, j, s, t \le l(B)}[e_{i}Be_{j}, e_{s}Be_{t}]$ by Lemma \ref{Lem2.2}. Thereby we investigate three cases in the following. \\

\textbf{Case 1} $i \neq t$ and $j \neq s$. \\
Since $e_{t}e_{i} = e_{j}e_{s} = 0$, $[e_{i}Be_{j}, e_{s}Be_{t}] = 0$. \\

\textbf{Case 2} $i=t$ and $j \neq s$.\\
Clearly, we have $[e_{i}Be_{j}, e_{s}Be_{i}] \subseteq e_{s}Be_{i}Be_{j} \subseteq e_{s}Be_{j}$ where $j \neq s$. \\

\textbf{Case 3} $i \neq t$ and $j = s$. \\
By similar way above, $[e_{i}Be_{j}, e_{j}Be_{t}] \subseteq e_{i}Be_{t}$ where $i \neq t$.\\

Thus the claim follows since the remaining case is that $i = t$ and $j = s$.
\end{proof}

Theorem \ref{Thm1.2} is a direct consequence of the next proposition. 
\begin{prop} \label{Prop3.3} The following are equivalent:
\begin{enumerate}
\item $\dim \soc^{n}(B) \cap Z(B) = c(B, n)$.
\item $[e_{i}Be_{j}, e_{j}Be_{i}] \subseteq e_{i}J^{n}e_{i} + e_{j}J^{n}e_{j}$ for all $1 \le i, j \le l(B)$.
\end{enumerate}
\end{prop}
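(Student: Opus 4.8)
My plan is to reduce statement (1) to a single inclusion of subspaces of the basic algebra $eBe$, and then to decode that inclusion blockwise through the Peirce decomposition, using Lemma \ref{Lem3.2}. First I would translate (1) into a statement about $eJ^{n}e$ and $[eBe, eBe]$. By Lemma \ref{Lem2.4} applied to the ideal $J^{n}$ of $B$, $\dim \soc^{n}(B) \cap Z(B) = \dim \Ann_{eBe}(eJ^{n}e) \cap Z(eBe)$ and $eJ^{n}e$ is an ideal of $eBe$. Hence by Lemma \ref{Lem2.1}(4) and Lemma \ref{Lem2.3} this intersection equals $(eJ^{n}e)^{\perp} \cap [eBe, eBe]^{\perp}$, which is $(eJ^{n}e + [eBe, eBe])^{\perp}$ by Lemma \ref{Lem2.1}(1). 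Taking dimensions via Lemma \ref{Lem2.1}(3) and comparing with $c(B, n) = \dim B(n)^{\perp} = \dim eBe - \dim B(n)$ from Lemma \ref{Lem2.5}, condition (1) becomes the equality $\dim(eJ^{n}e + [eBe, eBe]) = \dim B(n)$. Since the proof of Theorem \ref{Thm1.1} already supplies $B(n) \subseteq eJ^{n}e + [eBe, eBe]$, this equality is equivalent to the reverse inclusion $eJ^{n}e + [eBe, eBe] \subseteq B(n)$, so it remains to prove that this inclusion is equivalent to (2).

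For the direction (2) $\Rightarrow$ inclusion, I would note that $eJ^{n}e = \sum_{i} e_{i}J^{n}e_{i} + \sum_{i \neq j} e_{i}J^{n}e_{j} \subseteq B(n)$ and $\sum_{i \neq j} e_{i}Be_{j} \subseteq B(n)$ hold straight from the definition of $B(n)$, while (2) gives $[e_{i}Be_{j}, e_{j}Be_{i}] \subseteq e_{i}J^{n}e_{i} + e_{j}J^{n}e_{j} \subseteq B(n)$ for all $i, j$. Substituting these three facts into the bound of Lemma \ref{Lem3.2} yields $[eBe, eBe] \subseteq B(n)$, hence the desired inclusion. Conversely, assuming the inclusion, each $[e_{i}Be_{j}, e_{j}Be_{i}] \subseteq [eBe, eBe] \subseteq B(n)$; for $i \neq j$ this commutator lies in the two diagonal blocks $e_{i}Be_{i} \oplus e_{j}Be_{j}$, since for $x \in e_{i}Be_{j}$ and $y \in e_{j}Be_{i}$ the products $xy$ and $yx$ fall into $e_{i}Be_{i}$ and $e_{j}Be_{j}$ respectively. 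Because $B(n)$ meets these blocks exactly in $e_{i}J^{n}e_{i}$ and $e_{j}J^{n}e_{j}$, the inclusion forces $[e_{i}Be_{j}, e_{j}Be_{i}] \subseteq e_{i}J^{n}e_{i} + e_{j}J^{n}e_{j}$, and the case $i = j$ is identical with a single block. This is exactly (2).

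The step needing the most care is the blockwise reading of ``contained in $B(n)$'': it rests on the directness of the Peirce decomposition $eBe = \bigoplus_{i, j} e_{i}Be_{j}$, which is what lets me project a containment onto each diagonal summand $e_{i}Be_{i}$ and, crucially, guarantees that the off-diagonal components $\sum_{i \neq j} e_{i}J^{n}e_{j}$ of $eJ^{n}e$ impose no condition, being absorbed by $\sum_{i \neq j} e_{i}Be_{j} \subseteq B(n)$; thus only the diagonal blocks carry information. I would also double-check the first-step passage from a dimension equality to a set inclusion, which is legitimate only because of the prior containment $B(n) \subseteq eJ^{n}e + [eBe, eBe]$ obtained in the proof of Theorem \ref{Thm1.1}.
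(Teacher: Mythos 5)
Your proposal is correct and follows essentially the same route as the paper: both reduce (1) to the inclusion $eJ^{n}e + [eBe, eBe] \subseteq B(n)$ via the perpendicular-space dimension count from the proof of Theorem \ref{Thm1.1}, then use Lemma \ref{Lem3.2} for one direction and the Peirce decomposition $x = e_{i}xe_{i} + e_{j}xe_{j}$ for the other. Your write-up is if anything slightly more careful, in that it avoids asserting the exact equality $eJ^{n}e + [eBe, eBe] = B(n) + \sum_{i,j}[e_{i}Be_{j}, e_{j}Be_{i}]$ and works only with the inclusions actually needed.
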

\begin{proof}
By the proof of Theorem \ref{Thm1.1}, (1) holds if and only if $eJ^{n}e + [eBe, eBe] \subseteq B(n)$. However, by Lemma \ref{Lem3.1} and \ref{Lem3.2}, we have
\begin{align*}
eJ^{n}e + [eBe, eBe] & = \sum_{1 \le i \le l(B)} e_{i}J^{n}e_{i} + [eBe, eBe] \\
                                  & =B(n) + \sum_{1 \le i, j \le l(B)}[e_{i}Be_{j}, e_{j}Be_{i}].
\end{align*}

So it is clear that (2) implies (1). On the other hand, suppose (1) holds and $x \in [e_{i}Be_{j}, e_{j}Be_{i}]$. Thus $x \in [eBe, eBe] \subseteq B(n)$. Since we can write $x=e_{i}xe_{i} + e_{j}xe_{j}$, we deduce that $x \in e_{i}J^{n}e_{i} + e_{j}J^{n}e_{j}$, as required.
\end{proof}
  
It is easy to show the result of Okuyama in \cite{O} by using this proposition. If $i \neq j$, then $e_{i}Be_{j} = e_{i}Je_{j}$ and so $[e_{i}Be_{j}, e_{j}Be_{i}] = [e_{i}Je_{j}, e_{j}Je_{i}] \subseteq e_{i}J^{2}e_{i} + e_{j}J^{2}e_{j}$. On the other hand, $[e_{i}Be_{i}, e_{i}Be_{i}] = [Fe_{i} + e_{i}Je_{i}, Fe_{i} + e_{i}Je_{i}] \subseteq e_{i}J^{2}e_{i}$ since $e_{i}Be_{i}$ is local. Therefore $n = 2$ satisfies the conditions above. 

Now we prove Theorem \ref{Thm1.2}. We fix the largest integer $2 \le m \le \lambda$ satisfies the conditions in Proposition \ref{Prop3.3}. Then any integer $n$ (such that $n \le m$) holds same properties since $J^{m} \subseteq J^{n}$. Therefore we have completed the proof of the theorem.

\section*{Acknowledgment}
The author would like to thank Shigeo Koshitani and Taro Sakurai for helpful discussions and comments.


\begin{thebibliography}{9}

\bibitem{B}
J. Brandt,
\textit{A lower bound for the number of irreducible characters in a block},
J. Algebra \textbf{74} (1982), 509--515.

\bibitem{K}
B. K\"{u}lshammer, 
\textit{Bemerkungen \"{u}ber die Gruppenalgebra als symmetrische Algebra},
J. Algebra \textbf{72} (1981), 1--7.

\bibitem{Ko}
S. Koshitani, 
\textit{Endo-trivial modules for finite groups with dihedral Sylow $2$-subgroups},
in Proceedings of Symposium "Research on Finite Groups and Their Representations, Vertex Operator Algebras, and Algebraic Combinatorics", Kyoto University, January 5--8 (2016), Edited by H. Shimakura, to appear.


\bibitem{O}
T. Okuyama, 
\textit{$\Ext^{1} (S, S)$ for a simple $kG$-module $S$} (in Japanese),
in Proceedings of the Symposium "Representations of Groups and Rings and Its applications", Port Hill Yokohama, Japan, December 16--19 (1981), Edited by S. Endo, pp.238--249.

\bibitem{W}
T. Wada,
\textit{A lower bound of the Perron-Frobenius eigenvalue of the Cartan matrix of a finite group},
Arch. Math. (Basel) \textbf{73} (1999), 407--413.
 
\end{thebibliography}
\end{document}